\documentclass[11pt, a4paper]{article}
\bibliographystyle{alpha}
\usepackage{fullpage}

\usepackage{latexsym}
\usepackage{amsmath}
\usepackage{amssymb}
\usepackage{amsthm}
\usepackage{graphicx}
\usepackage{hyperref}
\usepackage{cite}

%theorem environments
\newtheorem{theorem}{Theorem}[section]
\newtheorem{theorem*}{Theorem}
\newtheorem{corollary}[theorem]{Corollary}
\newtheorem{lemma}[theorem]{Lemma}

\newtheorem{claim}[theorem]{Claim}

\theoremstyle{definition}

\newtheorem{question}[theorem]{Question}

% my custom commands
             %\inparen{x+y}  is (x+y)
           %\inbrace{x+y}  is {x+y}
             %\insquar{x+y}  is [x+y]
 %\inangle{A}    is <A>
\newcommand{\abs}[1]{\left|#1\right|}                 %\abs{x}        is |x|
        %\norm{x}       is ||x||

%Dirac

%blackboard letters
\newcommand{\F}{\mathbb{F}}

\newcommand{\R}{\mathbb{R}}
\newcommand{\C}{\mathbb{C}}

\renewcommand{\F}{\mathbb{F}}
\newcommand{\V}{\mathsf{V}}
\renewcommand{\R}{\mathsf{R}}
\renewcommand{\C}[2]{\binom{#1}{#2}}

\DeclareMathOperator{\sign}{sign}

%%% ----------------------------------------------------------------------
\title {Square root Bound on the Least Power Non-residue using a Sylvester-Vandermonde Determinant}
\author{Michael Forbes \footnote{Massachusetts Institute of Technology} \and Neeraj Kayal \footnote{Microsoft Research India} \and Rajat Mittal \footnote{Rutgers University} \and Chandan Saha \footnote{Max Planck Institute for Informatics}}

\begin{document}

%\keywords{}

\date{}

%\dedicatory{}

%%% ----------------------------------------------------------------------
\maketitle

\begin{abstract}
We give a new elementary proof of the fact that the value of the least $k^{th}$ power non-residue in an arithmetic progression $\{bn+c\}_{n=0,1...}$, over a prime field $\F_p$, is bounded by $7/\sqrt{5} \cdot b \cdot \sqrt{p/k} + 4b + c$. Our proof is inspired by the so called \emph{Stepanov method}, which involves bounding the size of the solution set of a system of equations by constructing a non-zero low degree auxiliary polynomial that vanishes with high multiplicity on the solution set. The proof uses basic algebra and number theory along with a determinant identity that generalizes both the Sylvester and the Vandermonde determinant. 
\end{abstract}

\section{Introduction}
Let $\F_p$ be the prime field with $p$ elements. An element $a \in \F_p$ is called a $k^{th}$ power non-residue if there is no $b \in \F_p$ such that $b^k = a$. Bounding the value of the least $k^{th}$ power non-residue in a prime field $\F_p$, where $k \mid p-1$, is a fundamental problem in number theory and algebra. It has an important application in finding roots over finite fields. For instance, it is known from the work of Vinogradov \cite{V72} (see also Proposition $7$ in \cite{E94}) that given a $k^{th}$ non-residue, all the $k^{th}$ power roots of an element $a \in \F_p$ i.e. all $x$ such that $x^{k} = a$, can be found in $(k \cdot \log p)^{O(1)}$ time. It is a major open problem in number theory to show that the least $k^{th}$ power non-residue is bounded by $(\log p)^{O(1)}$. Indeed, such a bound is already known under the powerful assumption of the Extended Riemann Hypothesis (ERH). It follows from the work of Ankeny \cite{A52} and Bach \cite{B82} that assuming ERH
 , the value of the least $k^{th}$ non-residue in $\F_p$ is bounded by $O(\log^2 p)$, where $k$ is a prime dividing $p-1$. However, such a strong bound is not yet shown without the assumption of any unproven conjecture. We now briefly mention the known results on ERH-free bounds on least power non-residues. 

\subsection{Earlier work}
The P\'{o}lya-Vinogradov inequality (see Chapter $23$ in \cite{D00}) states that
\begin{equation*}
\abs{\sum_{m+1 \leq x \leq m+n}{\chi(x)}} \leq \sqrt{p} \cdot \log p, 
\end{equation*}
where $\chi$ is a \emph{non-principal} character modulo $p$. Taking $\chi$ to be the quadratic character, it immediately follows that the least quadratic non-residue in $\F_p$ is bounded by $\sqrt{p}\log p$. In $1919$, this bound was improved by Vinogradov (see \cite{V54, V85}), who showed that the least quadratic non-residue in $\F_p$ is less than $p^{\frac{1}{2\sqrt{e}}} \log^2 p$. In a subsequent work, Vinogradov \cite{V27} also showed that if $k \mid p-1$ and $k > m^m$, where $m$ is an integer greater than $8$, then the least $k^{th}$ power non-residue is less than $p^{1/m}$ for all sufficiently large values of $p$. Later, in $1957$, Burgess \cite{B57} improved upon Vinogradov's result and showed that the least quadratic non-residue is in fact bounded by $p^{\frac{1}{4 \sqrt{e}} + \epsilon}$ for any small enough $\epsilon > 0$. A simple account of Burgess' theorem can be found in the work of Stepanov \cite{S75} (see also \cite{K68}). We note that the proofs of Vinogradov 
 and Burgess' results involve sophisticated analytic arguments on character sums. On the other hand, using purely elementary methods Brauer \cite{B32} showed that the length of the largest sequence of consecutive $k^{th}$ power residues or non-residues is bounded by $\sqrt{2p} + 2$. Later, Hudson \cite{H74} gave an elementary argument to show that the value of the smallest
$k^{th}$ power non-residue in an arithmetic progression $\{ bn + c\}_{n=0,1...}$ is bounded by $2^{11/4} b^{5/2} p^{2/5} + 6 b^3 p^{1/5} + 2 b^2$, if $p$ is sufficiently large. Surely, these bounds are worse than the best known bounds of Burgess and Vinogradov. Nevertheless, it is perhaps interesting to know how much elementary methods can achieve in proving non-trivial bounds for power residues and non-residues. 

\subsection{Our results}
We give a simple proof of the following fact.
\begin{theorem} \label{thm:pnrap}
The value of the least $k^{th}$ power non-residue in an arithmetic progression $\{ b n + c\}_{n=0,1, \ldots}$ over $\mathbb{F}_p$ is bounded by $7/\sqrt{5} \cdot b \cdot \sqrt{\frac{p-1}{k}} + 4b + c$.
\end{theorem}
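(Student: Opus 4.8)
\emph{Proof sketch.} The plan is to follow the Stepanov method. Set $r=\tfrac{p-1}{k}$, a positive integer since $k\mid p-1$, and recall that a nonzero $a\in\F_p$ is a $k$-th power residue precisely when $a^{r}=1$. Let $n^{\ast}$ be the least index with $u_{n^{\ast}}$ a non-residue, where $u_n:=bn+c$; it suffices to show $n^{\ast}\le\tfrac{7}{\sqrt5}\sqrt r+4$, since then the least non-residue is $u_{n^{\ast}}=bn^{\ast}+c\le\tfrac{7}{\sqrt5}\,b\sqrt r+4b+c$. By minimality, $u_0,\ldots,u_{n^{\ast}-1}$ are all $k$-th power residues or $0$; at most one of them vanishes, so, discarding that index, we are left with $t\ge n^{\ast}-1$ pairwise distinct elements of $\F_p\setminus\{0\}$, lying in an arithmetic progression of common difference $b$ and each satisfying $u_n^{r}=1$ — that is, $t$ distinct roots of $X^{r}-1$ in arithmetic progression. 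The whole problem thus reduces to showing that $X^{r}-1$ cannot have more than $\tfrac{7}{\sqrt5}\sqrt r+O(1)$ of its roots in an arithmetic progression.

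For that, I would fix an integer multiplicity parameter $m$ (to be optimised, of size about $\sqrt r$) and construct a nonzero auxiliary polynomial $F\in\F_p[X]$ such that (i) $\deg F$ is below an explicit bound $D$, and (ii) $F$ vanishes to order at least $m$ at each of the $t$ points $u_n$. Granting such an $F$, property (ii) forces $\prod_n (X-u_n)^{m}\mid F$, hence $mt\le\deg F<D$, so $t<D/m$; choosing $m$ and the construction so that $D/m$ is about $\tfrac{7}{\sqrt5}\sqrt r$ then finishes the argument. The essential — and delicate — point is to build $F$ so that (i) holds by design while (ii) is then \emph{deduced} from the relations $X^{r}\equiv1$ (and $X^{p}\equiv X$, since the $u_n$ lie in $\F_p$) that are valid at all of the $u_n$: one imposes far fewer than the naive $mt$ vanishing conditions when setting up $F$, and these relations then collapse the remaining system so that $F$ automatically vanishes to order $m$ at every $u_n$. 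In this set-up the conditions defining $F$ assemble into a homogeneous linear system whose coefficient matrix has both confluent-Vandermonde blocks (recording the values and derivatives of $F$ at the relevant points) and coefficient-shift, Sylvester-type, blocks (encoding $X^{r}\equiv1$ and $X^{p}\equiv X$); to know that a nonzero $F$ of degree below $D$ exists one must exhibit a nonzero maximal minor of this matrix. This is exactly where the paper's determinant identity is applied: it evaluates such a ``Sylvester--Vandermonde'' minor as a product of a Vandermonde factor $\prod_{i<j}(u_i-u_j)$ and a resultant-type (Sylvester) factor, both nonzero because the $u_n$ are distinct and the polynomials involved are coprime.

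I expect the main obstacle to be precisely the design of this auxiliary polynomial: choosing the family and the linear system so that (a) the controlling minor is genuinely a Sylvester--Vandermonde determinant — hence provably nonzero by the identity — and (b) this remains compatible with a degree bound $D$ that is small enough, and carries the right leading constant, for the resulting inequality $t<D/m$ to be exactly the claimed bound. A bare dimension count (number of coefficients of $F$ versus number of vanishing conditions) is circular and gives nothing; the entire gain, including the explicit constant $7/\sqrt5$ rather than a mere $O(\sqrt r)$, comes from evaluating the determinant exactly. The remaining work — optimising over $m$ and unwinding the reductions back to the value $bn^{\ast}+c$ — is then routine.
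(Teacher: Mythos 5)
Your top-level plan matches the paper's: bound the length of a run of consecutive $k$-th power residues in an arithmetic progression by the Stepanov method, with a Sylvester--Vandermonde determinant identity certifying that the auxiliary polynomial is nonzero. But your sketch stops precisely at the crux: you never specify $F$. The paper takes $F(x)=\sum_{i=1}^{r} G_i(x)\,(x+b(i-1))^{T}$, with $T$ slightly larger than $t:=(p-1)/k$ (your $r$) and the $G_i$'s undetermined polynomials of degree $d$; this particular shape is what makes the vanishing constraints collapse. And, unlike your requirement (ii), the collapse happens only at points $\alpha$ for which \emph{all} of $\alpha,\alpha+b,\ldots,\alpha+b(r-1)$ are nonzero residues --- that is, at starting points of all-residue windows of length $r$, not at every residue $u_n$ in the run. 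The paper's Lemma~\ref{lem:commonsol} bounds the number of such window starts by $2t/(r-1)+3$ and then adds back $r$ to recover the length of the run; your claim that $F$ can be made to ``automatically vanish to order $m$ at every $u_n$'' asks for more than this construction delivers.

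You also misplace the role of the determinant. In the paper the existence of a nontrivial tuple $(G_1,\ldots,G_r)$ is exactly a dimension count --- the number of free coefficients $(d+1)r$ versus the number of homogeneous vanishing constraints $M(d+s)+M(M+1)/2$ --- and it is this count, combined with the choice $M\sim r/2$ and the optimization $D(r-1)=T$, that yields a quadratic inequality in $r$ and hence the constant $7/\sqrt{5}$. The Sylvester--Vandermonde identity is invoked for a logically separate purpose: to show that once the $G_i$'s are not all zero, the assembled $F$ is itself a nonzero polynomial (equivalently, that the $Dr$ polynomials $x^{j}(x+b(i-1))^{T}$ are linearly independent), and --- via the explicit constant $C$ in the identity --- that this independence survives reduction modulo $p$ when $p>2t$. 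So the dimension count is not ``circular''; it is load-bearing and is where the constant comes from, while the determinant identity is what makes the count honest rather than being the source of the $7/\sqrt{5}$ itself. (Your appeal to $X^{p}\equiv X$ is unnecessary; the only relation the paper exploits at the common roots is $(x+a_i)^{t}=1$.)
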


\noindent Notice that, for $k \geq p^{1/5}$, the bound given by Theorem \ref{thm:pnrap} is better than the bounds shown by Hudson \cite{H74} and Brauer \cite{B32}. Our proof is inspired by the \emph{polynomial method}, which was introduced by Stepanov to give elementary proofs of many of the significant special cases of Weil's theorem on rational points on curves. The reader is encouraged to refer to the book by Schmidt \cite{S04} for an account of the elementary methods used in studying equations over finite fields. (For a quick introduction to some of the main results in this area refer to Tao's blog entry \cite{T09}.) 

The main idea behind Stepanov's method is to construct a non-zero auxiliary polynomial that vanishes with high multiplicity on the solution set of a system of equations. Now, if the degree of the auxiliary polynomial is also `small' then this can be used to upper bound the size of the solution set. We use this theme of bounding a solution set size via a low-degree auxiliary polynomial to give a new elementary proof of the square root bound on the least power non-residue/residue in any arithmetic progression. But, it turns out that the only `not so easy' part of our proof is showing that the auxiliary polynomial thus constructed is non-zero. We resolve this difficulty by using an interesting determinant identity that generalizes the determinant of both the Sylvester and the Vandermonde matrix. Proving this determinant identity constitutes the main technical contribution of our work. We hope that this identity on a generalized Sylvester-Vandermonde matrix is of independent interest and may find applications elsewhere.

\section{The Polynomial Method}

In this section, we describe our approach to proving Theorem \ref{thm:pnrap}. At the heart of our argument is the following lemma.

\begin{lemma} \label{lem:commonsol}
A system of univariate polynomials $\mathcal{S} = \{(x + a_i)^t - \theta_i\}_{1 \leq i \leq r}$, where $\theta_i, a_i \in \mathbb{F}_p$ and $a_i$'s are distinct, has at most $2t/(r-1) + 3$ common roots, if $r \leq 2/\sqrt{5} \cdot \sqrt{t} + 1$ and $p > 2t$.
\end{lemma}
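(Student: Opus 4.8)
The plan is to follow the Stepanov polynomial method: construct a non-zero auxiliary polynomial $F(x)$ of controlled degree that vanishes with high multiplicity at every common root of the system $\mathcal{S}$, and then compare the multiplicity-weighted root count against $\deg F$. Let $N$ denote the number of common roots. If $x_0$ is a common root, then $(x_0 + a_i)^t = \theta_i$ for every $i$, so all the quantities $(x_0+a_i)^t$ are pinned down simultaneously; this is the rigidity we will exploit. I would look for $F$ of the shape
\[
F(x) \;=\; \sum_{i=1}^{r} \sum_{j=0}^{m-1} c_{ij}\,(x+a_i)^{jt}\, g_i(x),
\]
or more cleanly a combination $\sum_i h_i(x)\,(x+a_i)^t \cdots$, where the $h_i$ are polynomials of degree roughly $t/(r-1)$ with undetermined coefficients, chosen so that $F$ vanishes to order about $m \approx r-1$ at each common root. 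Writing down the vanishing conditions (the value of $F$ and its first $m-1$ Hasse derivatives at each common root must be zero) gives a homogeneous linear system in the unknown coefficients $c_{ij}$; since at a common root every $(x_0+a_i)^t$ collapses to the constant $\theta_i$, these conditions can be phrased purely in terms of the low-degree pieces, so the number of unknowns can be made to exceed the number of constraints and a non-trivial solution $F$ exists. Balancing degrees, $\deg F \lesssim t + (\text{something of order } t/(r-1))$, and multiplicity $m(r-1)$ at $N$ points forces $N \cdot m \le \deg F$, which after optimizing $m$ yields $N \le 2t/(r-1) + 3$. The hypotheses $r \le \tfrac{2}{\sqrt5}\sqrt t + 1$ and $p > 2t$ are exactly what is needed to keep the parameters in the admissible range: $p>2t$ ensures the relevant binomial coefficients / derivative factors are nonzero in $\F_p$ (so vanishing to order $m$ is meaningful and the Hasse-derivative bookkeeping works), and the bound on $r$ makes the counting $(\#\text{unknowns} > \#\text{equations})$ go through with the stated constant.

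The genuinely hard step, as the introduction flags, is showing the constructed $F$ is \emph{not identically zero}. A priori the linear-algebra argument only produces a solution to the homogeneous system; one must rule out that the particular $F$ we land on is the zero polynomial. I would handle this by arranging the construction so that non-vanishing of $F$ is equivalent to non-singularity of an explicit coefficient matrix built from the data $a_1,\dots,a_r$ (and the exponents $0, t, 2t, \dots$), and then invoke the generalized Sylvester--Vandermonde determinant identity announced in the abstract: that determinant is a nonzero polynomial in the $a_i$ (it factors as a product of Vandermonde-type terms $\prod_{i<j}(a_i - a_j)$ to suitable powers, times a nonzero constant), and since the $a_i$ are distinct and, via $p > 2t$, the relevant integer exponents do not vanish mod $p$, the determinant is nonzero over $\F_p$. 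This is the one place where something beyond bare-hands algebra is invoked, and it is where I expect the real work to lie — everything else is degree-counting and a dimension argument.

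Concretely, the steps in order: (1) fix parameters $m$ (the target multiplicity, $\approx r-1$) and $d$ (the degree budget for each auxiliary piece, $\approx t/(r-1)$); (2) write the ansatz for $F$ as a linear combination of $(x+a_i)^{\ell t}\,x^s$ with $0 \le \ell < m$ and $0 \le s \le d$, counting $\approx rm(d+1)$ unknowns; (3) impose that $F$ and its Hasse derivatives up to order $m-1$ vanish at each common root, using $(x_0+a_i)^t = \theta_i$ to reduce these to $\approx N \cdot m$ (really $\le mr$, after grouping) linear constraints whose coefficient matrix has the generalized Sylvester--Vandermonde form; (4) choose parameters so unknowns exceed constraints, obtaining a non-zero $F$ — here the Sylvester--Vandermonde identity certifies that the constraint matrix has full rank, so the solution space has exactly the expected dimension and in particular the $F$ produced is genuinely non-zero and of the predicted degree; (5) bound $\deg F$ from the ansatz, conclude $N m \le \deg F$, and optimize over $m$ subject to $r \le \tfrac{2}{\sqrt5}\sqrt t + 1$ to extract $N \le 2t/(r-1)+3$. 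I would carry the constants symbolically and only plug in $\tfrac{2}{\sqrt5}$ and the slack $+3$ at the very end, to see transparently where the $7/\sqrt5$ in Theorem \ref{thm:pnrap} ultimately comes from.
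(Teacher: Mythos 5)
Your high-level plan is the right one — Stepanov's auxiliary-polynomial method, with a generalized Sylvester--Vandermonde determinant certifying non-vanishing of the constructed polynomial — but two of the concrete choices you make in the outline are wrong and, if carried out, would not give the stated bound. First, the ansatz $\sum_{i,\ell} c_{i\ell}\,(x+a_i)^{\ell t}\,x^s$ with $0 \le \ell < m$ produces a polynomial of degree about $(m-1)t + d$, so even with vanishing to order $m$ at each common root you only get a root count of order $t$, which is trivial. The paper instead takes a single high power, $F(x)=\sum_i G_i(x)(x+a_i)^{\,t+M-1+s}$ with $\deg G_i \le d \approx t/(r-1)$, so $\deg F \approx t + M + d$; the shift $t \mapsto t+M-1$ is essential because after taking up to $M-1$ derivatives the exponent on $(x+a_i)$ must stay $\ge t$ so that $(x_0+a_i)^t=\theta_i$ can still be substituted at a common root $x_0$, and the extra $s<r-1$ is inserted solely to make $T=t+M-1+s$ divisible by $r-1$ so that the coefficient matrix is square. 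Second, your target multiplicity $m\approx r-1$ is too large: the constraint count $M(d+s)+\tfrac{M(M+1)}{2}<(d+1)r$ pins $M$ down to about $r/2$, and it is this choice that produces $2t/(r-1)+3$ rather than something weaker.

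There is also a conflation in your step (4). The Sylvester--Vandermonde identity is not used to show the constraint matrix (from the vanishing conditions $F^{(\ell)}(\alpha)=0$) has full rank — that matrix is deliberately kept short of full rank so a nontrivial coefficient vector exists. The identity is applied to a different matrix, namely $\mathsf{V}$ whose rows are the coefficient vectors of $x^j(x+a_i)^T$: nonsingularity of $\mathsf V$ is exactly the statement that a nontrivial choice of the $G_i$'s yields a nonzero polynomial $F$, i.e.\ that the map from the ansatz coefficients to the coefficients of $F$ is injective. A nontrivial solution to the homogeneous linear system does not by itself imply $F\ne 0$, and this is precisely the gap the determinant identity is there to close. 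So the skeleton of your argument is right, but without the $t+M-1+s$ device, the correct $M\approx r/2$, and the correct object to which the determinant identity is applied, the proof as sketched would not go through.
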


\noindent Before we prove this lemma, let us at first see how it implies Theorem \ref{thm:pnrap}. (To keep the presentation simple, we avoid the use of the floor/ceiling notations. 
The analysis can be made more precise, at the cost of making the constant $4$ in Theorem \ref{thm:pnrap} and the constant $3$ in Lemma \ref{lem:commonsol} slightly worse.)

In Lemma \ref{lem:commonsol}, take $a_i = b \cdot (i-1)$, $t = (p-1)/k$ and $\theta_i = 1$ for all $1 \leq i \leq r$. Set $r = 2/\sqrt{5} \cdot \sqrt{t} + 1$. If the sequence of elements $bj + c, bj+c+b, bj+ c+ 2b, \ldots, bj+c+b(r-1)$ are $k^{th}$ power residues then surely, $bj + c$ is a common root of the system $\mathcal{S}$. By Lemma \ref{lem:commonsol}, there are at most $2t/(r-1) + 3$ common roots of $\mathcal{S}$. In the worst case, all these common roots can possibly be consecutive elements of the arithmetic progression $\{ bn + c\}_{n = 0, 1, \ldots}$. Therefore, the first index $m$ for which $bm + c$ is a $k^{th}$ power non-residue, can be at most $2t/(r-1) + 3 + r = 7/\sqrt{5} \cdot \sqrt{t} + 4$. The same argument can be used to prove a slightly general form of Theorem $\ref{thm:pnrap}$, as stated in the following corollary. 

\begin{corollary} \label{cor:conpnrap}
The length of the largest sequence of consecutive $k^{th}$ power residues or non-residues in an arithmetic progression $\{bn+c\}_{n=0,1,\ldots}$ is bounded by $7/\sqrt{5} \cdot \sqrt{p-1/k} + 4$.
\end{corollary}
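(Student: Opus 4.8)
The corollary asserts two bounds — one on runs of consecutive $k^{th}$ power residues and one on runs of consecutive non-residues — and the first is simply the argument behind Theorem~\ref{thm:pnrap}, read as a statement about a run lying anywhere in $\inbrace{bn+c}_{n\ge 0}$ rather than one starting at $n=0$. Put $t=(p-1)/k$, take $a_i=b(i-1)$ and $\theta_i=1$ for $1\le i\le r$, and set $r=\floor{2\sqrt{t}/\sqrt{5}}+1$, so $r\le 2\sqrt{t}/\sqrt{5}+1$ and (since $k\ge 2$) $p>2t$. If $bn_0+c,\ b(n_0+1)+c,\ \ldots,\ b(n_0+\ell-1)+c$ are all $k^{th}$ power residues then, for each $j$ with $0\le j\le \ell-r$, the $r$ terms $b(n_0+j)+c,\ldots,b(n_0+j+r-1)+c$ are residues, i.e.\ $\bigl(b(n_0+j)+c+a_i\bigr)^{t}=1$ for $1\le i\le r$; equivalently $b(n_0+j)+c$ is a common root of $\mathcal{S}=\inbrace{(x+a_i)^{t}-1}_{1\le i\le r}$. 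These $\ell-r+1$ roots are pairwise distinct ($b\ne 0$), so Lemma~\ref{lem:commonsol} gives $\ell-r+1\le 2t/(r-1)+3$, hence
\[
\ell\ \le\ \frac{2t}{r-1}+r+2\ \le\ \frac{7}{\sqrt{5}}\sqrt{\frac{p-1}{k}}+4 ,
\]
the last step absorbing the floor (and at most one term $bn+c$ equal to $0$) into the additive constant, exactly as noted after Lemma~\ref{lem:commonsol}.

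For a run of non-residues the plan is to reduce to, or re-run, the residue case. When $k=2$ this is painless: if $\alpha,\alpha+b,\ldots,\alpha+b(\ell-1)$ are all quadratic non-residues, then multiplying through by $\alpha^{-1}$ (itself a non-residue) produces the run $1,\ 1+b\alpha^{-1},\ \ldots,\ 1+b(\ell-1)\alpha^{-1}$ of quadratic \emph{residues} in the progression of common difference $b\alpha^{-1}$, and the first paragraph (whose bound is independent of the common difference) gives $\ell\le \tfrac{7}{\sqrt{5}}\sqrt{(p-1)/2}+5$. For general $k$ one instead encodes non-residue-ness polynomially: since the residues are exactly the $\alpha\in\F_p^{*}$ with $\alpha^{t}=1$, an element $\alpha$ is a non-residue iff $\sum_{j=0}^{k-1}\alpha^{jt}=0$. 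A run of $\ell$ non-residues then makes each of its $\ell-r+1$ window starts a common root of $\mathcal{T}=\inbrace{\sum_{j=0}^{k-1}(x+a_i)^{jt}}_{1\le i\le r}$, whose generators have degree $t(k-1)$ and on whose common-root locus $(x+a_i)^{p-1}=1$ (in place of $(x+a_i)^{t}=1$); carrying out the auxiliary-polynomial construction behind Lemma~\ref{lem:commonsol} for $\mathcal{T}$ then yields a bound of the shape $\tfrac{7}{\sqrt{5}}\sqrt{p-1}+O(1)$. As $\sqrt{(p-1)/k}\le\sqrt{p-1}\le\sqrt{p-1/k}$, both halves are subsumed by $\tfrac{7}{\sqrt{5}}\sqrt{p-1/k}+4$, which is why the corollary states this (purposely un-simplified) bound rather than the tighter $\sqrt{(p-1)/k}$ of Theorem~\ref{thm:pnrap}.

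The part I expect to be the real obstacle is the non-residue case for $k\ge 3$. First, as the paper stresses for Lemma~\ref{lem:commonsol}, one must show the auxiliary polynomial built for $\mathcal{T}$ is non-zero; the associated matrix is no longer of pure Sylvester--Vandermonde type — the exponents now run up to $p-1$, and the role of the constants $\theta_i$ is played by the order of $(x+a_i)^{t}$ on the common-root locus — so the determinant identity must be adapted or re-proved. Second, the hypothesis that $p$ dwarf the relevant degree, automatic in Lemma~\ref{lem:commonsol} because $2t=2(p-1)/k<p$, is not automatic for $\mathcal{T}$ whose generators have degree close to $p-1$; so one must either find a lower-degree polynomial encoding of ``non-residue'' or redo the Stepanov step with the multiplicity/degree inequalities tuned to the non-residue locus. (An appealing shortcut — split a run of non-residues into maximal sub-runs each lying in a single coset of $H_k=\setdef{x}{x^{t}=1}$, each of which rescales multiplicatively onto a run of residues — stalls because a single run may meet all $k-1$ non-trivial cosets and one then has to bound the number of such sub-runs separately.) Everything else — the window-sliding, the count of common roots, and the final optimization over $r$ — copies the proof of Theorem~\ref{thm:pnrap} verbatim.
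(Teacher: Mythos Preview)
Your residue half is exactly the paper's argument: slide a length-$r$ window across the run, each window start is a common root of $\mathcal{S}=\{(x+a_i)^t-1\}_i$, and Lemma~\ref{lem:commonsol} caps the number of starts. The paper offers nothing more than the sentence ``the same argument can be used'' for Corollary~\ref{cor:conpnrap}, so on this part you match.

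Where you go astray is the non-residue half. First, the expression $\sqrt{p-1/k}$ is a typographical slip for $\sqrt{(p-1)/k}$: compare Theorem~\ref{thm:pnrap}, which writes the identical bound as $\sqrt{\tfrac{p-1}{k}}$ and of which the corollary is advertised as ``a slightly general form''. Your reading of it as a deliberately looser $\sqrt{p-(1/k)}\approx\sqrt{p}$, chosen to absorb a weaker non-residue bound, is ingenious but unsupported. Second, your proposed system $\mathcal{T}=\bigl\{\sum_{j=0}^{k-1}(x+a_i)^{jt}\bigr\}_i$ has generators of degree $(k-1)t=p-1-t$, so even if you could redo the full-rank step (and, as you correctly flag, the coefficient matrix is no longer Sylvester--Vandermonde and the hypothesis $p>2\cdot\text{degree}$ fails), the output would be $O(\sqrt{p})$, not the claimed $O(\sqrt{(p-1)/k})$.

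The way ``the same argument'' actually extends, with no new work, is to take the $\theta_i$ in Lemma~\ref{lem:commonsol} equal to the common value $\omega$ of $\alpha^t$ along the run: if every term of the run lies in a single coset of the $k^{th}$ powers (automatic when $k=2$, since the non-residues form one coset), then each window start is a common root of $\{(x+a_i)^t-\omega\}_i$ and Lemma~\ref{lem:commonsol} applies verbatim. This subsumes your $k=2$ rescaling trick and is presumably what the paper intends. For $k\ge 3$ and a run of non-residues spread over several cosets, neither your $\mathcal{T}$-construction nor the paper's one-line remark yields the stated $\sqrt{(p-1)/k}$ bound; your catalogue of obstacles there is accurate, but the rescue via the literal parsing of $\sqrt{p-1/k}$ should be dropped.
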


\noindent The rest of this section and the following section (Section \ref{sec:fullrank}) are devoted to the proof of Lemma \ref{lem:commonsol}.

\subsection{Proof of Lemma \ref{lem:commonsol}}
The strategy we employ to bound the number of common roots of $\mathcal{S}$, denoted by $\nu(\mathcal{S})$ henceforth, is inspired by what is known as the `Stepanov method' (also called the `polynomial method'). The idea is to show the existence of a non-zero polynomial $F(x)$ of \emph{small} degree $N$ (say) such that if $\alpha$ is a common root of $\mathcal{S}$ then $\alpha$ is also a root of $F(x)$ with multiplicity $M$ (say). If this happens then we immediately know that $\nu(\mathcal{S})$ can be at most $\frac{N}{M}$. By making $N$ as small as possible and $M$ as large as possible, we can arrive at an upper bound for $\nu(\mathcal{S})$.

Let us see how to put this idea at work. Choose $F(x)$ to be of the form,
\begin{equation} \label{eqn:polyform}
F(x) = \sum_{i = 1}^{r}{G_i(x) (x+a_i)^{t+M-1+s}},
\end{equation}
where $G_i$'s are polynomials of degree at most $d$ (say) and $M$ is the multiplicity parameter mentioned above. The parameters $d$ and $M$ will be fixed eventually in terms of $t$ and $r$. Define $s$ as,
\begin{eqnarray*}
 s &=& 0 \text{,\hspace{0.1in} if $(r-1) \mid (t+M-1)$}  \\
   &=& (r-1) - ((t+M-1) \mod (r-1)) \text{,\hspace{0.1in} otherwise}. 
\end{eqnarray*}
The role of the parameter $s$ is to make $t+M-1+s$ perfectly divisible by $r-1$, a technical requirement for the analysis in Section \ref{sec:fullrank} to go through. Let us take a short digression and clarify a bit more the purpose of the parameter $s$. 

One might wonder as to why we do not assume, for the sake of simplicity, that $s=0$ and $t+M-1$ is divisible by $r-1$. At some point in our argument we need to establish linear independence of a certain linear system. If the coefficient matrix associated to the linear system is a square matrix then all we need to show is that the corresponding determinant is non-zero. It turns out, it can be shown that such a determinant is non-zero by using an identity involving \emph{derivatives} of the determinant function. Whereas, for a non-square system it is a little more tedious.

Coming back to the main flow of the proof, let us see what is required from the polynomial $F(x)$. Denote the $\ell^{th}$ derivative of $F$ with respect to $x$ by $F^{(\ell)}$ and let $T = t + M - 1 + s$. Also, $G^{(j)}$ denotes the $j^{th}$ derivative of $G$. If $\alpha$ is a root of the system $\mathcal{S}$ then we require $F^{(\ell)}(\alpha) = 0$ for all $0 \leq \ell \leq M-1$ since we want $\alpha$ to be a root of $F$ with multiplicity $M$. This means,
\begin{equation} \label{eqn:deriv_constraints}
F^{(\ell)}(\alpha) = \sum_{i=1}^{r}{\sum_{j=0}^{\ell}{ c_j(T) \cdot \theta_i \cdot G_i^{(j)}(\alpha) \cdot (\alpha + a_i)^{M-1+s-(\ell-j)} }} = 0,
\end{equation}
where $c_j(T) = \prod_{k=0}^{\ell - j - 1}{(T - k)}$ is a constant and $(\alpha + a_i)^t$ is evaluated to $\theta_i$ since $\alpha$ is a root of $\mathcal{S}$. Suppose that the coefficients of the polynomials $G_i$'s, in Equation \ref{eqn:polyform}, are variables. Also, treat the expression given in Equation \ref{eqn:deriv_constraints} as a polynomial in $\alpha$ of degree $(d + M - 1 + s - \ell)$ with coefficients as linear forms in the variables (that are coefficients of the $G_i$'s). By equating these coefficients to zeroes, we can ensure that $F^{(\ell)}(\alpha)$ is zero. Therefore, for any particular $\ell$, Equation \ref{eqn:deriv_constraints} imposes $(d + M + s - \ell)$ homogeneous linear constraints, yielding a total of $M \cdot (d + s) + \frac{M(M+1)}{2}$ homogeneous equations in $(d + 1) \cdot r$ variables (as $\ell$ runs from $0$ to $M-1$). Thus, in order that we get a nontrivial solution for the coefficients of $G_i$'s, it is sufficient to satisfy the the following condition,
\begin{equation} \label{cond:cond1}
M \cdot (d + s) + \frac{M(M+1)}{2} < (d+1) \cdot r.
\end{equation} 
Further, we also need to ensure that this solution is such that $F(x) \neq 0$. The degree of the polynomial $F(x)$ is $N = d + t + M - 1 + s$. If the number of variables $(d+1) \cdot r$ is greater than $d + t +  M + s$ then surely there is a nontrivial setting of the coefficients of $G_i$'s that makes $F(x) = 0$. However, such a situation can be possibly averted if we also put the restriction that
\begin{equation} \label{cond:cond2}
(d + 1) \cdot r \leq d + t + M + s.
\end{equation} 
Indeed, we show (in Section \ref{sec:fullrank}) that Condition \ref{cond:cond2} is sufficient to guarantee $F(x) \neq 0$ if the coefficients of the $G_i$'s are not all zeroes. To summarize, Condition \ref{cond:cond1} ensures that we are able to find nontrivial $G_i$'s by solving the homogeneous linear equations arising from Equation \ref{eqn:deriv_constraints}, for $0 \leq \ell \leq M-1$. Whereas, Condition \ref{cond:cond2} guarantees that the polynomial $F(x)$, defined in Equation \ref{eqn:polyform}, is non-zero if not all the $G_i$'s are zeroes - the proof of this appears in Section \ref{sec:fullrank}. 

Putting together Condition \ref{cond:cond1} and \ref{cond:cond2}, and using $D = d + 1$, we get the following overall condition to satisfy.
\begin{equation*}
M \cdot (D + s) + \frac{M(M-1)}{2} < D \cdot r \leq D + t + M + s - 1.
\end{equation*}
Since our objective is to minimize the quantity $\frac{N}{M} = \frac{D + t + M + s - 2}{M}$, we would like to minimize $D + t + M + s - 1$, which being lower bounded by $D \cdot r$, the best we could possibly do is to choose $D$ such that,
\begin{eqnarray} \label{eqn:Dvalue}
D \cdot r &=& D + t + M + s - 1 \nonumber \\
\Rightarrow D &=& \frac{t + M + s - 1}{r-1} 
\end{eqnarray} 

\noindent This setting of $D$ satisfies Condition \ref{cond:cond2}. Now, let us see how to satisfy Condition \ref{cond:cond1}. Choose $M = r/2$ and put $D = (t + M + s - 1)/(r-1)$ as in Equation \ref{eqn:Dvalue}. Using the fact that $s \leq r-1$ and then simplifying further, Condition \ref{cond:cond1} reduces to the following quadratic inequality:
\begin{equation*}
 5r^2 - 17r - (4t - 14) < 0.
\end{equation*}
It is easy to check that this is satisfied if $r \leq 2/\sqrt{5} \cdot \sqrt{t} + 1$. We are almost done. Recall that the maximum size of $\nu(\mathcal{S})$, the set of common solutions of $\mathcal{S}$, is bounded by $N/M$, where $N = \deg(F)$. Hence,
\begin{equation*}\
 \abs{\nu(\mathcal{S})} \leq \frac{N}{M} < \frac{D + t + M + s - 1}{M} = \frac{D \cdot r}{M} \text{\hspace{0.1in}(using Equation \ref{eqn:Dvalue})}
\end{equation*}
Since $M = r/2$, $\abs{\nu(\mathcal{S})} \leq 2D$. Once again, using the value of $D$ it is easy to derive that 
\begin{equation*}
 \abs{\nu(\mathcal{S})} \leq \frac{2t}{r-1} + 3.
\end{equation*}
This proves Lemma \ref{lem:commonsol} except the lemma:
\begin{claim} \label{clm:nullF}
If $D = \frac{t + M + s - 1}{r - 1}$ then $F(x) = 0$ if and only if $G_i(x) = 0$, for all $1 \leq i \leq r$.
\end{claim}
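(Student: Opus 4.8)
## Proof Proposal for Claim \ref{clm:nullF}

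The plan is to set this up as a linear-algebra statement about a structured matrix and then reduce the claim to showing that matrix has full column rank. Consider the polynomial $F(x) = \sum_{i=1}^r G_i(x)(x+a_i)^T$ where $T = t + M - 1 + s$ and each $G_i$ has degree at most $d = D-1$. The ``if'' direction is trivial, so assume $F(x) = 0$; we must show every $G_i$ vanishes. Write out $F$ in terms of the unknown coefficients of the $G_i$'s: there are $Dr$ such coefficients in total. Saying $F(x)=0$ as a polynomial of degree $d + T = D + t + M + s - 2$ amounts to $D + t + M + s - 1$ homogeneous linear equations on these $Dr$ unknowns. By Equation \ref{eqn:Dvalue}, $Dr = D + t + M + s - 1$, so the coefficient matrix is \emph{square}. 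Thus the claim is equivalent to the assertion that this square matrix is nonsingular.

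The next step is to identify the matrix explicitly. Its columns are indexed by pairs $(i, j)$ with $1 \le i \le r$ and $0 \le j \le d$, the column for $(i,j)$ being the coefficient vector of $x^j (x+a_i)^T$; its rows are indexed by the powers $x^0, \ldots, x^{d+T}$. This is precisely a generalized Sylvester-type matrix: when $r = 2$ and $d+1 = T$ it is the classical Sylvester (resultant) matrix of $(x+a_1)^T$ and $(x+a_2)^T$, and the shifted-power block structure also degenerates to a Vandermonde-like form in suitable limits — this is the ``generalized Sylvester-Vandermonde'' object the introduction advertises. I would now invoke (or, in the actual paper, prove in Section \ref{sec:fullrank}) the determinant identity for this matrix, which expresses its determinant as an explicit nonzero product — morally a power of $\prod_{i < i'} (a_i - a_{i'})$ times a combinatorial constant — using the hypothesis that $r-1 \mid t + M - 1 + s = T$, i.e. $D = (t+M+s-1)/(r-1)$ makes the block sizes match up evenly. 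Since the $a_i$ are distinct and (using $p > 2t$, which controls $T < p$ so that the relevant binomial/falling-factorial constants are nonzero mod $p$) the combinatorial factor is a unit in $\F_p$, the determinant is nonzero.

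The one genuine subtlety, and the reason the parameter $s$ was introduced, is exactly this divisibility condition $(r-1) \mid T$: it is what guarantees the matrix is square and what makes the determinant identity clean. If $T$ were not divisible by $r-1$ one would instead face a non-square (and over-determined or under-determined) system, and the rank argument becomes the ``little more tedious'' case alluded to in the text. So in the write-up I would first reduce to the square case via the equation $Dr = d + T + 1$, then state the generalized Sylvester-Vandermonde determinant identity as the key lemma, and conclude nonsingularity from it; the bulk of Section \ref{sec:fullrank} goes into proving that identity.

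I expect the main obstacle to be proving the determinant identity itself rather than this reduction. The reduction is bookkeeping — counting coefficients and matching $Dr$ against $D + t + M + s - 1$ via Equation \ref{eqn:Dvalue}. The identity, by contrast, must handle a matrix that is neither purely Vandermonde nor purely Sylvester, and the natural route (as the text hints) is to differentiate the determinant as a function of the $a_i$'s — treating it as a polynomial in $a_1, \ldots, a_r$, showing each $a_i - a_{i'}$ divides it to a predictable multiplicity by exhibiting enough vanishing of the determinant and its derivatives when $a_i \to a_{i'}$, then pinning down the leading coefficient by a degree count and a boundary evaluation. That is the technical heart and where all the care about $s$, the divisibility, and the bound $p > 2t$ gets spent.
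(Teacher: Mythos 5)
Your proposal is correct and takes essentially the same approach as the paper's: you reformulate $F=0$ as a square homogeneous linear system in the $Dr$ unknown coefficients of the $G_i$'s, identify the coefficient matrix (up to transpose) with the generalized Sylvester--Vandermonde matrix $\V$, and deduce nonsingularity from Lemma \ref{lem:SVidentity} together with the distinctness of the $a_i$'s and the bound $p>2t$ guaranteeing the combinatorial constant is a unit. Even your sketch of how the determinant identity itself would be proved — divisibility by $(a_i - a_j)^{D^2}$ via vanishing of derivatives as $a_1 \to a_2$, followed by a degree count to pin down the scalar — is the argument the paper carries out in Section \ref{sec:fullrank}.
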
 
\paragraph{} \noindent The next section is devoted to the proof of this statement. The main ingredient of the proof is an identity involving a generalized Sylvester-Vandermonde determinant. The condition ``$p > 2t$'' (in Lemma \ref{lem:commonsol}) also appears in this proof.

\section{A Generalized Sylvester-Vandermonde Determinant} \label{sec:fullrank}
Recall, from Equation \ref{eqn:polyform}, that $F(x)$ is defined as $F(x) = \sum_{i=1}^{r}{G_i(x) \cdot (x + a_i)^T}$, where $T = t + M + s - 1$. Suppose $G_i = \sum_{j=0}^{d}{c_{ij} x^j}$. Then,
\begin{equation*}
F(x) = \sum_{i=1}^{r}{\sum_{j=0}^{d}{c_{ij} x^j (x+a_i)^T}}. 
\end{equation*}
Proving Claim \ref{clm:nullF} essentially means proving this: if $F(x) = 0$ then $c_{ij} = 0$, for all $i$ and $j$. Suppose, on the contrary, that this is false. Then, the polynomials $\{ x^j (x + a_i)^T\}_{i,j}$, for $1 \leq i \leq r$ and $0 \leq j \leq d$, are $\F$-linearly dependent. In other words, the following matrix,
\begin{equation*}
\mathsf{V} = \begin{pmatrix}
			\C{T}{T}a_1^T	&\cdots		&\C{T}{T-d}a_1^{T-d}	&\cdots	&\cdots	&\C{T}{0}a_1^0		& 	&\\
					&\ddots		&\ddots			&\ddots	&\ddots	&\ddots			&\ddots	&\\
					&		&\C{T}{T}a_1^T		&\cdots	&\cdots	&\C{T}{d}a_1^{d}	&\cdots	& \C{T}{0}a_1^0\\
			\hline
			\C{T}{T}a_2^T	&\cdots		&\C{T}{T-d}a_2^{T-d}	&\cdots	&\cdots	&\C{T}{0}a_2^0		& 	&\\
					&\ddots		&\ddots			&\ddots	&\ddots	&\ddots			&\ddots	&\\
					&		&\C{T}{T}a_2^T		&\cdots	&\cdots	&\C{T}{d}a_2^{d}	&\cdots	& \C{T}{0}a_2^0\\
			\hline
			\vdots		&\vdots		&\vdots		&\vdots	&\vdots	&\vdots			&\vdots&\vdots\\
			\hline
			\C{T}{T}a_r^T	&\cdots		&\C{T}{T-d}a_r^{T-d}	&\cdots	&\cdots	&\C{T}{0}a_r^0		& 	&\\
					&\ddots		&\ddots			&\ddots	&\ddots	&\ddots			&\ddots	&\\
					&		&\C{T}{T}a_r^T		&\cdots	&\cdots	&\C{T}{d}a_r^{d}	&\cdots	& \C{T}{0}a_r^0
		\end{pmatrix}
\end{equation*}
must be singular. But, we show, in Lemma \ref{lem:SVidentity}, that $\mathsf{V}$ cannot be singular if $a_i$'s are distinct and $p > 2t$. This leads us to the necessary contradiction and hence a proof of Claim \ref{clm:nullF}. 

\paragraph{Remark} - Notice that, $\mathsf{V}$ is a square matrix since the number of rows $D \cdot r$ equals the number of columns $D + T$, by the choice of $D = T/(r-1)$ in Claim \ref{clm:nullF}. We call $\mathsf{V}$ a generalized Sylvester-Vandermonde matrix because when $r=2$, it becomes the Sylvester matrix of the two polynomials $(x + a_1)^T$ and $(x + a_2)^T$, whereas when $D=1$, it is the Vandermonde matrix (scaled appropriately).

\paragraph{} We now prove the following identity. 

\begin{lemma} [\textsf{Sylvester-Vandermonde identity}] \label{lem:SVidentity}
The $\det(\mathsf{V}) = C \cdot \prod_{1 \leq i < j \leq r}{(a_i - a_j)^{D^2}}$, where $C = \prod_{\ell = 0}^{T+d}{\C{T+d}{\ell}}/\prod_{j = 0}^{d}{\C{T+d}{j}^r}$.
\end{lemma}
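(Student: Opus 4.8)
The plan is to identify $\det(\V)$ as a polynomial in $\F[a_1,\dots,a_r]$ and pin it down by (i) computing its degree, (ii) showing $(a_i-a_j)^{D^2}$ divides it for every pair $i<j$, and (iii) computing the leading coefficient $C$. For the degree count, note that the column indexed by the power $a_\ell^{e}$ contributes $e$ to the degree; summing over the $D$ rows of each block and over the $r$ blocks, one gets $\deg \det(\V) = r \cdot \sum_{\text{one block}} (\text{exponents}) $. Since each block is a ``staircase'' of $D$ shifted rows of the coefficient vector of $(x+a_i)^T$, the total exponent in one block is $\sum_{k=0}^{D-1}\big(\binom{?}{} \text{shift}\big)$, which works out to $\tfrac{D T}{2} + \tfrac{D(D-1)}{2}\cdot\text{(something)}$; the point is that this matches $\binom{r}{2} D^2$ exactly when $D(r-1)=T$, which is precisely the hypothesis $D = T/(r-1)$. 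So the total degree equals the degree of the claimed right-hand side, and it then suffices to prove divisibility by each $(a_i - a_j)^{D^2}$ and to evaluate the constant.

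For the divisibility, fix a pair, say $i=1, j=2$, and substitute $a_2 = a_1$; I want to show that $\det(\V)$ vanishes to order at least $D^2$ in $(a_1 - a_2)$. Equivalently, working over the local ring at $a_1 = a_2$, I must exhibit $D^2$ ``worth'' of linear dependence among the rows — more precisely, after the substitution $a_2 \mapsto a_1$, the $2D$ rows coming from blocks $1$ and $2$ all become rows built from the single coefficient vector of $(x+a_1)^T$, and the staircase structure means blocks $1$ and $2$ share a large overlap; quantifying the rank drop and iterating the argument (or, cleaner, using the derivative/Leibniz formula for $\det$ as a function of $a_2$, differentiating up to $D^2-1$ times and checking each derivative still vanishes at $a_2=a_1$) gives the factor $(a_1-a_2)^{D^2}$. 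This is the step the authors flag — they mention ``an identity involving derivatives of the determinant function'' — so I expect the bookkeeping of how the $j$-th derivative of $\det$ in $a_2$ decomposes (via multilinearity in the $D$ rows of block $2$) and why each term is still killed by a row-dependence with block $1$ to be the genuine technical heart; this is also where the hypothesis $p > 2t$ (hence $p > T$ up to the choice of $M,s$, so that the binomial coefficients $\binom{T}{\ell}$ and the factorials appearing in differentiation are nonzero in $\F_p$) must be used.

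Having shown $\det(\V) = C \cdot \prod_{i<j}(a_i-a_j)^{D^2}$ for a scalar $C$ independent of the $a_i$, I would compute $C$ by specializing. The natural choice is to extract the coefficient of a convenient monomial, or to let the $a_i$ be formal and compare leading terms; alternatively one can use the two degenerate cases the authors highlight: for $D=1$ the matrix is a (scaled) Vandermonde, fixing the normalization there, and for $r=2$ it is the Sylvester matrix of $(x+a_1)^T$ and $(x+a_2)^T$, whose resultant is $(a_1-a_2)^{T^2}$ up to the product of binomial coefficients — matching $D = T$ in that case. Reconciling the scaling $\C{T}{\ell}$ that decorates every entry with the clean Vandermonde/Sylvester normalizations produces exactly $C = \prod_{\ell=0}^{T+d}\C{T+d}{\ell} \big/ \prod_{j=0}^{d}\C{T+d}{j}^{r}$, where $T + d = N - 1$ is the number of columns minus one; in particular $C \neq 0$ in $\F_p$ since $p > 2t \geq T+d$ (after the parameter choices), which is what is actually needed downstream for Claim \ref{clm:nullF}.
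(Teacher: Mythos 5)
Your plan does follow the paper's structure — degree comparison, divisibility by each $(a_i-a_j)^{D^2}$ via derivatives, and extraction of the constant $C$ as the coefficient of the extremal monomial $\prod_i a_i^{D^2(r-i)}$ — but the part you defer as ``bookkeeping'' is precisely where the proof lives, and the two ideas needed there are not ones your outline would naturally produce. First, the paper does a preliminary row reduction sending the $D$ rows $x^j(x+a_i)^T$, $j=0,\ldots,d$, of each block to the coefficient vectors of $(x+a_i)^{T+j}$; only after this does differentiation in $a_1$ act cleanly, sending a row of block $1$ to a scalar multiple of $(x+a_1)^{T+j-\ell_{j+1}}$, i.e.\ another row of the same shape with a lowered exponent. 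Second, once the Leibniz expansion of $f^{(\ell)}(a_1)=\partial^\ell_{a_1}\det(\V')$ is in hand, one must observe that a term survives the substitution $a_1\mapsto a_2$ only if the $D$ exponents $\{T+j-\ell_{j+1}\}$ are pairwise distinct and all strictly below $T$ (else a row matches a block-$2$ row up to scalar), hence are dominated by $\{T-1,\ldots,T-D\}$; summing forces $\ell=\sum_j\ell_{j+1}\geq D^2$. Without these two steps your ``rank-drop'' alternative only yields vanishing to order $D$ (the blocks overlap in $D$ rows, not $D^2$), and the derivative route has no visible mechanism. Two smaller corrections: the identity is really proved over $\mathbb{Z}$ (so the characteristic-$p$ subtleties of the derivative test don't arise) and $p>2t$ is used only to ensure $C\neq 0$ in $\F_p$, not in the divisibility argument; and specializing to $D=1$ or $r=2$ checks the normalization at two corners but cannot determine $C$ for general $(T,d,r)$ — the monomial-coefficient computation you mention as the ``natural choice'' is the one that actually works, and is what the appendix carries out (reducing to a Hankel-type determinant of binomial coefficients evaluated by an explicit row reduction).
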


\noindent It is not hard to check that $C \neq 0$ in $\F_p$, if $p > 2t$ (just use the facts that $s \leq r-2$ and $r$ is an integer less than or equal to $2/\sqrt{5} \cdot \sqrt{t} + 1$). 

\subsection{Proof of Lemma \ref{lem:SVidentity}}
First, we show that $\det(\V)$, viewed as a polynomial in $a_1, \ldots, a_r$, is divisible by $(a_1 - a_2)^{D^2}$. Then, by symmetry, $\det(\V)$ is also divisible by $(a_i - a_j)^{D^2}$ for every pair $(i,j)$ with $i < j$. Hence, $\det(\V)$ is divisible by $\prod_{1 \leq i < j \leq r}{(a_i - a_j)}^{D^2}$. By looking at the matrix $\V$, it is easy to infer that the highest degree of $a_1$ in $\det(\V)$ (once again, viewed as a polynomial in $a_1, \ldots, a_r$) is at most $D \cdot T$. Since the degree of $a_1$ in the expression $\prod_{1 \leq i < j \leq r}{(a_i - a_j)}^{D^2}$ is $D^2 \cdot (r-1) = D \cdot T$ (as $D = T/(r-1)$), $\det(\V)$ must be of the form $C \cdot \prod_{1 \leq i < j \leq r}{(a_i - a_j)^{D^2}}$, where $C$ is just a function of $T, d$ and $r$, but not the $a_i$'s. 

\paragraph{} In the proof, it will be more convenient if we express matrix $\V$ in terms of polynomials. Notice that the rows of $\V$ can be identified with the coefficient vectors of the polynomials $(x + a_1)^T, x(x + a_1)^T, \ldots, x^d(x+a_1)^T, (x+a_2)^T, x(x+a_2)^T, \ldots, x^d(x+a_2)^T, \ldots$ and so on. Let us abuse notations slightly and write $\V$ as,
\begin{equation} \label{eqn:redmatrix}
\V =
\begin{pmatrix}
(x+a_1)^T \\
x(x+a_1)^T \\
\vdots \\
x^{d} (x+a_1)^T \\
\hline 
(x+a_2)^T \\
%x(x+a_2)^t \\
\vdots \\
x^{d} (x+a_2)^T \\
\hline
\vdots \\
\hline
(x+a_r)^T \\
%x(x+a_r)^t \\
\vdots \\
x^{d} (x+a_r)^T \\
\end{pmatrix}
\hspace{0.2in}
\stackrel{\text{row operations}}{\longmapsto}
\hspace{0.2in}
\V' =
\begin{pmatrix}
(x+a_1)^T \\
(x+a_1)^{T+1} \\
\vdots \\
(x+a_1)^{T+d} \\
\hline 
(x+a_2)^T \\
%(x+a_2)^{t+1} \\
\vdots \\
(x+a_2)^{T+d} \\
\hline
\vdots \\
\hline
(x+a_r)^T \\
%(x+a_r)^{t+1} \\
\vdots \\
(x+a_r)^{T+d} \\
\end{pmatrix},
\end{equation}
meaning that $\V$ is formed by the coefficient vectors of these polynomials. Let $\mathsf{R}_{i j}$ be the row of $\V$ standing for the coefficient vector of $x^j(x+a_i)^T$. Consider the following row operations on $\V$. 
\begin{equation*}
\mathsf{R}_{i j} \mapsto \sum_{k=0}^{j}{\C{j}{k} \cdot a_i^k \cdot \mathsf{R}_{i \hspace{0.01in} j-k}}, \hspace{0.2in} \text{for $1 \leq i \leq r$ and $0 \leq j \leq d$}.
\end{equation*}
Equivalently, after the row operations, the coefficient vector of $x^j (x+a_i)^T$ gets replaced by that of the polynomial,
\begin{equation*}
 \sum_{k=0}^{j}{\C{j}{k} \cdot a_i^k \cdot x^{j-k} (x + a_i)^T} = (x + a_i)^{T+j}.
\end{equation*}
This leaves us with a transformed matrix $\V'$, as shown in Equation \ref{eqn:redmatrix}, such that $\det(\V) = \det(\V')$. To show that $(a_1 - a_2)^{D^2}$ divides $\det(\V')$, view $\det(\V') = f(a_1)$ as a polynomial in $a_1$. Let $f^{(\ell)}(a_1)$ denote the $\ell^{th}$ order derivative of $f(a_1)$ with respect to $a_1$. It is sufficient if we are able to show that $a_1 = a_2$ is a root of $f^{(\ell)}(a_1)$, for all $0 \leq \ell < D^2$.
\begin{claim} \label{clm:detstruct}
Let $f(a_1) = \det(\V')$ and $f^{(\ell)}(a_1) = \frac{\partial^{(\ell)}}{\partial a_1^{(\ell)}} f(a_1)$. Then $a_1 = a_2$ is a root of $f^{(\ell)}(a_1)$, for all $0 \leq \ell < D^2$.
\end{claim}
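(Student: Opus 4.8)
The plan is to differentiate $f(a_1)=\det(\V')$ in $a_1$, exploiting that only the first $D=d+1$ rows of $\V'$ (Equation~\ref{eqn:redmatrix}) depend on $a_1$ --- these being the coefficient vectors of $(x+a_1)^{T},(x+a_1)^{T+1},\dots,(x+a_1)^{T+d}$. Since the determinant is multilinear in its rows, the product rule gives
\begin{equation*}
f^{(\ell)}(a_1)\;=\;\sum_{m_0+m_1+\cdots+m_d=\ell}\binom{\ell}{m_0,m_1,\dots,m_d}\,\det\!\bigl(M_{(m_0,\dots,m_d)}\bigr),
\end{equation*}
where $M_{(m_0,\dots,m_d)}$ is obtained from $\V'$ by replacing, for each $j$, the coefficient vector of $(x+a_1)^{T+j}$ by its $m_j$-th derivative in $a_1$, and leaving the rows belonging to $a_2,\dots,a_r$ untouched; the sum ranges over all ways of distributing the $\ell$ derivatives among those first $D$ rows. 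Because $\partial_{a_1}^{\,m}(x+a_1)^{T+j}=(T+j)(T+j-1)\cdots(T+j-m+1)\cdot(x+a_1)^{T+j-m}$ (and vanishes when $m>T+j$), after differentiation the $j$-th of these rows is a scalar multiple of the coefficient vector of $(x+a_1)^{\,T+j-m_j}$.

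Then I would specialise $a_1=a_2$ and ask which summands survive. After this substitution the first $2D$ rows of $M_{(m_0,\dots,m_d)}$ are scalar multiples of the coefficient vectors of the powers $(x+a_2)^{\,T+j-m_j}$ for $0\le j\le d$, together with $(x+a_2)^{T},(x+a_2)^{T+1},\dots,(x+a_2)^{T+d}$. The elementary fact I would use is that the coefficient vectors of $(x+a_2)^{e}$ for pairwise distinct exponents $e$ are $\F_p$-linearly independent, because their leading monomials $x^{e}$ lie in distinct positions. Hence if two of these $2D$ exponents coincide --- or if one of the falling-factorial scalars vanishes in $\F_p$ --- then two rows of $M_{(m_0,\dots,m_d)}$ become proportional, or one row is zero, and the determinant is $0$. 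So a summand is nonzero only if the $2D$ exponents $\{\,T+j-m_j:0\le j\le d\,\}\cup\{\,T,T+1,\dots,T+d\,\}$ are pairwise distinct.

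The last step is a short counting estimate on the tuples $(m_0,\dots,m_d)$ meeting that condition. Distinctness forces each $T+j-m_j$ to avoid $\{T,\dots,T+d\}$; since $T+j-m_j\le T+d$ always, this means $T+j-m_j\le T-1$, i.e.\ $m_j\ge j+1$ for every $j$. Writing $m_j=(j+1)+n_j$ with $n_j\ge 0$, we get $T+j-m_j=T-1-n_j$, so these values are distinct precisely when $n_0,\dots,n_d$ are distinct nonnegative integers; therefore $\sum_j n_j\ge 0+1+\cdots+d=\binom{D}{2}$ and
\begin{equation*}
\ell\;=\;\sum_{j=0}^{d}m_j\;=\;\sum_{j=0}^{d}(j+1)+\sum_{j=0}^{d}n_j\;\ge\;\frac{D(D+1)}{2}+\frac{D(D-1)}{2}\;=\;D^{2}.
\end{equation*}
Thus whenever $\ell<D^{2}$, every summand in the expansion of $f^{(\ell)}(a_1)$ vanishes at $a_1=a_2$, which is exactly Claim~\ref{clm:detstruct}.

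The crux --- and the one spot where a reader might go astray --- is that it is not enough to use only the constraint ``the differentiated exponents must avoid the block $\{T,\dots,T+d\}$ contributed by the $a_2$-rows'', which alone gives the too-weak bound $\ell\ge\frac{D(D+1)}{2}$; one must also use the \emph{mutual} distinctness of the shifted exponents $T-1-n_j$, and it is exactly this that contributes the extra $\binom{D}{2}$, lifting the bound to $D^{2}$. Reassuringly, no cancellation among surviving summands ever has to be analysed; and the bound is sharp --- the tuple $m_0=\cdots=m_d=D$ gives exponents $T-D,\dots,T-1$, all distinct and disjoint from $\{T,\dots,T+d\}$ --- which dovetails with the degree count $D^{2}(r-1)=DT$ already used to reduce $\det(\V)$ to $C\cdot\prod_{i<j}(a_i-a_j)^{D^{2}}$. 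Incidentally, the hypothesis $p>2t$ is not needed for this claim: a vanishing falling-factorial scalar only helps, by producing a zero row; it is used elsewhere, to guarantee $C\ne 0$ in $\F_p$.
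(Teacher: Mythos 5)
Your proof is correct and follows the same strategy as the paper: expand $f^{(\ell)}$ via the multilinear/product-rule formula for derivatives of a determinant, observe that each surviving summand at $a_1=a_2$ forces the shifted exponents $T+j-m_j$ to be pairwise distinct and disjoint from the $a_2$-block $\{T,\dots,T+d\}$, and count to conclude $\ell\ge D^2$. The paper phrases the counting as the exponent set being ``dominated'' by $\{T-1,\dots,T-D\}$ rather than via your explicit substitution $m_j=(j+1)+n_j$, but this is merely a presentational difference; your added remarks that vanishing falling-factorial scalars only help and that $p>2t$ is not needed here are both accurate.
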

\begin{proof}
To prove this claim, we need the following identity involving derivatives of a determinant. Let $ A= (a_{i,j})$ be an $n \times n$ matrix whose entries are real functions of $x$. Then,
\begin{equation*}
\frac{d^\ell}{dx^\ell}\det(A)
= \sum_{\ell_1+ \ell_2 + \cdots + \ell_n = \ell} {\ell \choose \ell_1,\ell_2,...,\ell_n} \det \begin{pmatrix} \frac{ d^{\ell_1}}{dx^{\ell_1}}a_{1,1} & \frac{ d^{\ell_1}}{dx^{\ell_1}}a_{1,2} & \cdots
&\frac{d^{\ell_1}}{dx^{\ell_1}}a_{1,n} \cr \vdots & \vdots & & \vdots \cr \frac{d^{\ell_n}}{dx^{\ell_n}}a_{n,1} & \frac{d^{\ell_n}}{dx^{\ell_n}}a_{n,2} & \cdots &
\frac{d^{\ell_n}}{dx^{\ell_n}}a_{n,n}
\end{pmatrix},
\end{equation*}
where $ {\ell \choose \ell_1,\ell_2,...,\ell_r}$ is the multinomial coefficient. Now imagine applying this identity to $f^{(\ell)}(a_1)$, the $\ell^{th}$ order derivative of $\det(\V')$, where the entries of $\V'$ are viewed as functions of $a_1$. It is clear from Equation \ref{eqn:redmatrix} that except for the first $D$ rows of $\V'$, the rest are independent of the variable $a_1$. Denote by $\R_{i j}$, the row of $\V'$ generated by the coefficients of $(x+a_i)^{T + j}$. We write $\frac{d^{\ell}}{d a_1^{\ell}} \R_{i j}$ to mean the row formed by applying the operator $\frac{d^{\ell}}{d a_1^{\ell}}$ to every entry of $\R_{i j}$. Therefore, we have the following identity. (For economy of space, we switch to the transpose notation.)
\begin{equation*}
f^{(\ell)}(a_1) = \sum_{\ell_1+ \ell_2 + \cdots + \ell_D = \ell} {\ell \choose \ell_1,\ell_2,...,\ell_D} \det \left([\frac{d^{\ell_1}}{d a_1^{\ell_1}} \R_{1 0}, \ldots, \frac{d^{\ell_D}}{d a_1^{\ell_D}} \R_{1 d}, \R_{2 0}, \ldots, \R_{2 d}, \ldots, \R_{r 0}, \ldots, \R_{r d}]^T \right).
\end{equation*}
Notice one nice property of the polynomial representation of $\V'$: In the above equation, $\frac{d^{\ell_{j+1}}}{d a_1^{\ell_{j+1}}} \R_{1 j}$ is exactly the row formed by the coefficients of $\frac{d^{\ell_{j+1}}}{d a_1^{\ell_{j+1}}} (x + a_1)^{T + j} = c_j \cdot (x + a_1)^{T + j - \ell_{j+1}}$, where $c_j$ is a constant (depending only on $T$, $j$ and $\ell_{j+1}$). Now let us see how large $\ell$ needs to be so that $(a_1 - a_2)$ does not divide $f^{(\ell)}(a_1)$. 

Suppose $(a_1 - a_2) \nmid f^{(\ell)}(a_1)$. Then there exist a term $L$ in the above summation that is not divisible by $(a_1 - a_2)$. Let that term be identified by some tuple $(\ell_1, \ell_2, \ldots, \ell_D)$. Observe that this term,
\begin{eqnarray*}
L &=&  \det \left([\frac{d^{\ell_1}}{d a_1^{\ell_1}} \R_{1 0}, \ldots, \frac{d^{\ell_D}}{d a_1^{\ell_D}} \R_{1 d}, \R_{2 0}, \ldots, \R_{2 d}, \ldots, \R_{r 0}, \ldots, \R_{r d}]^T \right) \\
&=& \det \left( [c_0(x + a_1)^{T - \ell_1}, \ldots, c_d(x+a_1)^{T+d-\ell_D}, (x+a_2)^T, \ldots, (x+a_2)^{T+d}, \ldots]^T \right).
\end{eqnarray*}
If any of the exponents $\{T-\ell_1, T+1-\ell_2, \ldots, T+d-\ell_D \}$ is greater or equal to $T$ then $(a_1 - a_2) \mid L$; since otherwise some row $c_j (x+a_1)^{T+j-\ell_{j+1}}$ becomes equal to some other row $(x+a_2)^{T+k}$ (up to a multiple of $c_j$), when $a_1$ is replaced by $a_2$. Also, if any two of the exponents $\{T-\ell_1, T+1-\ell_2, \ldots, T+d-\ell_D \}$ are the same then $L = 0$. This leaves us with only one option - the set $\{T-\ell_1, T+1-\ell_2, \ldots, T+d-\ell_D \}$ is `dominated' by the set $\{T-1, T-2, \ldots, T-D\}$. (We say a set $S_1$ is \emph{dominated} by another set $S_2$ if for every element $e_1 \in S_1$ there is a unique element $e_2 \in S_2$ such that $e_1 \leq e_2$.) Therefore,
\begin{eqnarray*}
\sum_{j=0}^{d}{T + j - \ell_{j+1}} &\leq& \sum_{k=1}^{D}{T - k} \\
\Rightarrow \ell &\geq& DT + \frac{d(d+1)}{2} - DT + \frac{D(D+1)}{2} \\
&=& D^2 \text{\hspace{0.2in} (Taking $d = D-1$)} 
\end{eqnarray*}
\end{proof}
\noindent It follows from Claim \ref{clm:detstruct} and the discussion before that $\det(\V)$ is of the form $C \cdot \prod_{1 \leq i < j \leq r}{(a_i - a_j)^{D^2}}$, where $C$ is a constant depending only on $T, d$ and $r$. What remains to be done, in order to complete the proof of Lemma \ref{lem:SVidentity}, is to show that $C = \prod_{\ell = 0}^{T+d}{\C{T+d}{\ell}}/\prod_{j = 0}^{d}{\C{T+d}{j}^r}$. The proof of this is included in Appendix \ref{appdx}.

\section{Discussion}

\noindent Although, the square root bound of Corollary \ref{cor:conpnrap} is not the best known bound for this problem, it may be worthwhile exploring the `polynomial method' further to see if the bound can be strengthened, or if nontrivial bounds of some other related problems can be derived through it. Towards this, we have the following three questions in mind.  

\begin{question}
\emph{\textsf{(Strengthening Lemma \ref{lem:commonsol})}} Is it possible to give a better bound on the number of common solutions of the system $\mathcal{S}$ (defined in Lemma \ref{lem:commonsol}), perhaps by considering an auxiliary polynomial of the form,
\begin{equation*}
 F(x) = \sum_{\sigma \in S_\ell}{G_{\sigma}(x) \cdot \prod_{i \in \sigma}{(x+a_i)^{T+M-1}}},
\end{equation*}
where $S_\ell$ is the set of all $\ell$-tuples with $\ell$ distinct elements chosen from $\{1, \ldots r\}$? ($\abs{S_\ell} = {r \choose \ell}$).
\end{question}

\noindent Note, in our case, the auxiliary polynomial $F(x)$ is defined with $\ell = 1$.

\begin{question}
\emph{\textsf{(Simultaneous quadratic character)}} Is it possible to use our approach to show that for any pair of distinct elements $a, b \in \mathbb{F}_p$, the value of the largest possible $m$ for which $\chi((a + i) \cdot (b + i)) = 1$, for all $0 \leq i \leq m$, is $O(\sqrt{p} \log p)$? Here $\chi(a)$ denotes the quadratic character of $a$.  
\end{question}

\begin{question}
\emph{\textsf{(Least primitive element in $\mathbb{F}_p$)}} Is it possible to show that the value of the least primitive element in $\mathbb{F}_p$ is $O(\sqrt{p} \cdot \mathsf{poly}(\log p))$ using our approach?  
\end{question}

\section*{Acknowledgement}
This research was done when the authors F, M and S were interning at Microsoft Research India. The authors are thankful to MSR India for providing an excellent environment for research.

\bibliography{references}

\appendix

\section{The constant in Lemma \ref{lem:SVidentity}} \label{appdx}
Since the monomial $\prod_{i=1}^r a_i^{D^2(r-i)}$ has coefficient $1$ in the product $\prod_{1 \leq i<j \leq r} (a_i-a_j)^{D^2}$ (viewed as a polynomial in the $a_i$'s), the constant $C$ in Lemma \ref{lem:SVidentity} is the same as the coefficient of the monomial $\prod_{i=1}^r a_i^{D^2(r-i)}$ in $\det(\V)$.
\begin{claim}
The coefficient of the monomial $\prod_{i=1}^r a_i^{D^2(r-i)}$ in $\det(\V)$ is
$\prod_{\ell=0}^{T+d}\binom{T+d}{\ell}/\prod_{j=0}^{d}\binom{T+d}{j}^r$.
\end{claim}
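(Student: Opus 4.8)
The plan is to compute the coefficient of the monomial $\prod_{i=1}^r a_i^{D^2(r-i)}$ in $\det(\V)$ directly from the Leibniz expansion, exploiting the near-triangular block structure of $\V$. Recall that $\V$ consists of $r$ blocks of $D$ rows each, where the $i$-th block holds the coefficient vectors of $(x+a_i)^T, x(x+a_i)^T, \ldots, x^d(x+a_i)^T$ (with $d = D-1$), laid out in the staircase pattern shown in the matrix display. The first step is to identify, for each block $i$, which columns a permutation must select from that block in order to contribute to the target monomial. Since the total $a_1$-degree available is $DT$ and the target asks for $a_1^{D^2(r-1)} = a_1^{DT}$, the permutation is forced to pick, in block $1$, the entries of maximal $a_1$-degree in each of its $D$ rows; these are exactly the leading entries $\binom{T}{T}a_1^T$ appearing on the ``diagonal'' of the staircase. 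An analogous constraint (by the same degree-counting argument applied successively, or by the divisibility structure already established) pins down the columns chosen in blocks $2, \ldots, r$: block $i$ must contribute $a_i^{D^2(r-i)}$, again forcing the leading staircase entries in each of its rows, but now offset to the right by the $D$ columns consumed by each earlier block.

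Once the column pattern is fixed, the contribution of each block is a product of binomial coefficients $\binom{T}{T-m}$ for the appropriate shifts $m$, times a single sign coming from how these selected columns interleave into a permutation of $\{1, \ldots, D+T\}$. So the second step is the bookkeeping: write the selected column indices explicitly as a function of $(i,j)$ (block $i$, row $j$ within the block), verify they form a permutation, and compute its sign. I expect the sign to work out to $+1$ or to a product that telescopes, because the staircase pattern selects columns in a ``sorted-within-interleaved-blocks'' order; a clean way to see this is to note that the row operations of Equation \ref{eqn:redmatrix} turn $\V$ into $\V'$ with rows $(x+a_i)^{T+j}$, and in $\V'$ the coefficient of the target monomial is even more transparent since row $\R_{ij}$ is simply the coefficient vector of $(x+a_i)^{T+j}$, whose $a_i^{D^2(r-i)}$-relevant entries are binomial coefficients $\binom{T+j}{\cdot}$. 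Working in $\V'$ rather than $\V$ is likely the cleaner route, and I would set it up that way: express $\det(\V') = \det(\V)$, extract the coefficient of $\prod_i a_i^{D^2(r-i)}$ from the Leibniz sum over $\V'$, and observe that only one permutation survives the degree constraints.

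The remaining step is the algebraic simplification showing that the surviving product of binomials equals $\prod_{\ell=0}^{T+d}\binom{T+d}{\ell} \big/ \prod_{j=0}^{d}\binom{T+d}{j}^r$. The natural identity to invoke here is that the coefficients arising from the rows $(x+a_i)^{T+j}$ for $j = 0, \ldots, d$ within one block, multiplied across all $r$ blocks, rearrange into a full product $\prod_{\ell=0}^{T+d}\binom{T+d}{\ell}$ after normalizing each row: the trick is that $\binom{T+j}{m} = \binom{T+d}{m}\binom{T+d}{j}^{-1}\binom{m+?}{?}\cdots$ — more precisely one should renormalize the rows of $\V'$ by dividing row $\R_{ij}$ by $\binom{T+d}{j}$ (contributing the $\prod_j \binom{T+d}{j}^r$ in the denominator, once per block) so that all entries become unified as coefficients of $\binom{T+d}{\ell}$-type, and then the single surviving permutation picks up each $\binom{T+d}{\ell}$ for $\ell = 0, \ldots, T+d$ exactly once.

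The main obstacle I anticipate is the second step: correctly pinning down the unique surviving permutation and computing its sign, because the interleaving of $r$ staircase blocks into one $(D+T)\times(D+T)$ matrix makes the index arithmetic delicate, and an off-by-one in the column offsets would corrupt the binomial product. The degree-counting argument that forces the permutation is conceptually easy, but making it airtight — in particular ruling out permutations that ``borrow'' $a_i$-degree unevenly across blocks — requires care; I would handle it by a clean induction on the blocks $i = 1, 2, \ldots, r$, at each stage using that the maximum $a_i$-degree attainable from block $i$ (given the columns still available) is exactly $D^2(r-i)$, with equality only for the staircase-leading selection.
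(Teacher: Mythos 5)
The first half of your plan is on target and matches the paper: by counting $a_i$-degrees block by block (block $1$ must achieve the maximum $a_1$-degree $DT=D^2(r-1)$, then block $2$ the maximum of what remains, and so on), one shows that only permutations sending the row block $R_i$ to the column block $C_i$ can produce the monomial $\prod_i a_i^{D^2(r-i)}$. But the next step is where the proposal goes wrong: the degree constraint does \emph{not} single out one permutation. The $a_i$-degree contributed by a permutation restricted to block $i$ depends only on the \emph{set} $\sigma(R_i)$, not on the particular bijection $R_i\to C_i$, because for rows in block $i$ the degree of the $(\ell,c)$-entry is $T+j(\ell)-c+1$ and the sum $\sum_{\ell\in R_i}(T+j(\ell)-\sigma(\ell)+1)$ is invariant under permuting $\sigma$ within $C_i$. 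Hence \emph{all} $(D!)^r$ permutations that respect the block partition contribute, and the coefficient factors as $\prod_{i=1}^r\det(M_i)$, where $M_i$ is the $D\times D$ submatrix on rows $R_i$ and columns $C_i$. For the extreme blocks $i\in\{1,r\}$, $M_i$ is triangular and a single permutation does survive, but for the inner blocks $1<i<r$ the matrix $\det(H_i)$ (with the $a_i$-power factored out) is a genuinely dense determinant of binomial coefficients with up to $D!$ nonzero terms.

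This is the real content of the claim, and your sketch has no mechanism to handle it: ``observe that only one permutation survives'' would hand you a single product of binomials, not the stated formula. The paper closes the gap by reducing each $\det(H_i)$ via row operations to a Hankel-type determinant of binomial coefficients and invoking a separate identity (Lemma \ref{lem:hankeldet}),
\begin{equation*}
\det\Bigl(\binom{n+m-a}{\ell+b}\Bigr)_{0\le a,b\le m}=\frac{\prod_{j=0}^m\binom{n+m}{\ell+j}}{\prod_{j=0}^m\binom{n+m}{j}},
\end{equation*}
which after substitution gives $\det(H_i)=\prod_{j=0}^d\binom{T+d}{D(i-1)+j}/\binom{T+d}{j}$, and the product over $i$ telescopes to the claimed constant. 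Your instinct about renormalizing rows of $\V'$ by $\binom{T+d}{j}$ is in the right neighborhood — that denominator does appear — but it cannot be carried out at the level of a single surviving permutation; it emerges only after the per-block Hankel determinants are evaluated and multiplied. You would need to formulate and prove that binomial determinant identity (or an equivalent) to make the argument go through.
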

\begin{proof}

By definition, $\det(\V)=\sum_{\sigma\in S_m}\sign(\sigma) \cdot \prod_{\ell \in [m]} v_{\ell,\sigma(\ell)}$, where $m = Dr$, $S_m$ is the symmetric group of degree $m$, and $v_{i,j}$ is the $(i,j)^{th}$ entry of $\V$. Note that, every product $\prod_{\ell \in [m]} v_{\ell,\sigma(\ell)}$ is a monomial in the $a_i$'s with an attached coefficient. We need to find out, which all permutations $\sigma$ give rise to the monomial $\prod_{i=1}^r a_i^{D^2(r-i)}$. 

Let both $R_i$ and $C_i$ denote the set $\{D(i-1)+1,\ldots,D(i-1)+D\}$, so that
$[Dr]=R_1\cup \cdots \cup R_r=C_1 \cup \cdots \cup C_r$. We think
of the $R_i$'s as partitioning the rows and the $C_i$'s as partitioning the columns of $\V$.
For instance, the rows with indices in $R_i$ contain only those terms involving the variable $a_i$. 
A crucial observation here is the following. The monomial $\prod_{i=1}^r a_i^{D^2(r-i)}$ is generated 
by exactly those permutations $\sigma$ that induce bijections between $R_i \leftrightarrow C_i$, for all $1 \leq i \leq r$.
This gives us a strategy to find the coefficient of $\prod_{i=1}^r a_i^{D^2(r-i)}$.

Define the matrix $M_i$ as the $D \times D$ submatrix of $\V$ which is induced by the rows $R_i$ and the columns $C_i$. Since each term in $\det(M_i)$ has the same degree in $a_i$, which is $D^2(r-i)$, the coefficient of $\prod_i a_i^{D^2(r-i)}$ can be obtained from the product $\prod_{1 \leq i \leq r} \det(M_i)$. 
		
\noindent Notice that $\det(M_i)=a_i^{D^2(r-i)} \cdot \det(H_i)$, where $H_i$ is the following matrix
formed by the binomial coefficients of the terms in $M_i$.  
\begin{equation*}
H_i =
	\begin{pmatrix}
	\binom{T}{T-D(i-1)}	&\binom{T}{T-D(i-1)-1} 		&\cdots	&\binom{T}{T-D(i-1)-d}\\
	\binom{T}{T-D(i-1)+1}	&\ddots 			&\ddots	&\binom{T}{T-D(i-1)-(d-1)}\\
	\vdots 			&\ddots				&\ddots	&\vdots\\
	\binom{T}{T-D(i-1)+d}	&\binom{T}{T-D(i-1)+(d-1)}	&\cdots	&\binom{T}{T-D(i-1)}
	\end{pmatrix}.
\end{equation*}
Therefore, the coefficient of $\prod_{i=1}^r a_i^{D^2(r-i)}$ in $\det(\V)$ is exactly $\prod_{1 \leq i \leq r} \det(H_i)$. For $1 < i < r$, each of the binomial coefficients in the matrix $H_i$ is non-degenerate, whereas for $i\in\{1,r\}$ it is easy to see that $\det(H_i)=1$ as $H_i$ is a triangular matrix with units along the diagonal. Suppose $1 < i < r$. After an appropriate row transformation, $H_i$ gets transformed to,

\begin{equation*}
H_i' =
\begin{pmatrix}
\binom{T+d}{T-D(i-1)+d}	&\binom{T+d}{T-D(i-1)+(d-1)} 		&\cdots	&\binom{T+d}{T-D(i-1)}\\
\binom{T+d-1}{T-D(i-1)+d}	&\ddots 			&\ddots	&\binom{T+d-1}{T-D(i-1)}\\
\vdots 			&\ddots				&\ddots	&\vdots\\
\binom{T}{T-D(i-1)+d}	&\binom{T}{T-D(i-1)+(d-1)}	&\cdots	&\binom{T}{T-D(i-1)}
\end{pmatrix},
\end{equation*}
so that $\det(H_i) = \det(H_i')$. Now we apply the following lemma, which we prove shortly, to find an expression for $\det(H_i')$.
\begin{lemma}
\label{lem:hankeldet}
For $n, m, \ell \in\mathbb{N}$, satisfying $\ell + m \le n$,
\begin{equation*}
\begin{vmatrix}
\binom{n+m}{\ell+m} & \binom{n+m}{\ell+m-1} & \cdots & \binom{n+m}{\ell}\\
\binom{n+m-1}{\ell+m} & \binom{n+m-1}{\ell+m-1} & \cdots & \binom{n+m-1}{\ell}\\
\vdots & \vdots &\ddots &\vdots\\
\binom{n}{\ell+m} & \binom{n}{\ell+m-1} & \cdots & \binom{n}{\ell}
\end{vmatrix}
=\frac{\prod_{j=0}^m\binom{n+m}{\ell+j}}{\prod_{j=0}^m\binom{n+m}{j}}
\end{equation*}
\end{lemma}

\noindent Taking $n \rightarrow T$, $m \rightarrow d$ and $\ell \rightarrow T - D(i-1)$, in the above lemma, we get 
\begin{equation}
\label{eq:hankel}
\det(H_i)
= \prod_{j=0}^d \frac{\binom{T+d}{T-D(i-1)+j}}{\binom{T+d}{j}}
= \prod_{j=0}^d \frac{\binom{T+d}{D(i-1)+j}}{\binom{T+d}{j}}.
\end{equation}
Note that the condition $\ell + m \le n$, in Lemma \ref{lem:hankeldet}, is satisfied after the substitution since $1 < i < r$. Also, the above formula \ref{eq:hankel} evaluates to $1$ for $i\in\{1,r\}$. Hence, the formula holds for all $1 \leq i \leq r$. Therefore, the coefficient of $\prod_{i=1}^r a_i^{D^2(r-i)}$ is,
\begin{equation*}
\prod_{i=1}^{r}{\det(H_i)}
=\prod_{i=1}^r \prod_{j=0}^d\frac{\binom{T+d}{D(i-1)+j}}{\binom{T+d}{j}}
=\frac{\prod_{\ell=0}^{T+d}\binom{T+d}{\ell}}{\prod_{j=0}^d\binom{T+d}{j}^r},
\end{equation*}
as claimed.
\end{proof}

It remains to prove Lemma \ref{lem:hankeldet}. 
\begin{proof} [Proof of Lemma \ref{lem:hankeldet}]
Index the rows from the bottom - the $0^{th}$ row $\R_0$ is the bottommost. Consider the row operation $\R_m \rightarrow (-1)^m \cdot \R_m + \sum_{k=0}^{m-1} (-1)^k \binom{m}{k} \frac{\binom{n+m}{\ell}}{\binom{n+k}{\ell}} \cdot \R_k$.
Then the value of the topmost row in the $i^{th}$ column (from the right, starting from zero) is 
\begin{align*}
\sum_{k=0}^m    \binom{n+k}{\ell+i}   (-1)^k   \binom{m}{k}  \frac{\binom{n+m}{\ell}}{\binom{n+k}{\ell}} =
\frac{\binom{n+m}{\ell}}{\binom{\ell+i}{\ell}} \sum_{k=0}^m   (-1)^k   \binom{m}{k} \binom{n-\ell+k}{i}  
\end{align*}
The claim is - the quantity inside the summation is zero for $i\in \{0,1,\cdots,m-1\}$. This is because of the following identity involving binomial coefficients (see page-169 in the book \cite{GKP89}). 
\begin{equation*}
\sum_{k=0}^m   (-1)^k   \binom{m}{k} \binom{s+k}{i} = (-1)^m \cdot \binom{s}{i-m},
\end{equation*}
which is zero as $\binom{s}{i-m} = 0$ when $i < m$ (by definition). For $i=m$, the above equation evaluates to $(-1)^m$. This means, after the transformation the value of the leftmost entry of the top row is $(-1)^m \cdot {\binom{n+m}{\ell}}/{\binom{\ell+m}{\ell}}$, whereas all the remaining entries of the row are zeroes. Recall that, in the row transformation we have multiplied the first row $\R_m$ by $(-1)^m$, and so remultiplying by $(-1)^m$ the top-left entry becomes simply ${\binom{n+m}{\ell}}/{\binom{\ell+m}{\ell}}$. Using this argument inductively on the minors, the determinant evaluates to,
\begin{equation*}
\prod_{j=0}^{m} \frac{\binom{n+j}{\ell}}{\binom{\ell+j}{\ell}} = \prod_{j=0}^m \frac{\binom{n+m}{\ell+j}}{\binom{n+m}{j}}.
\end{equation*}
The last equality is simple to verify.
\end{proof}

\end{document}